\newtheorem{thm}{Theorem}[section]
\newtheorem{lem}[thm]{Lemma}
\theoremstyle{definition}
\theoremstyle{remark}
\newcommand{\sbu}{\sigma^{**}}
\newcommand{\floor}[1]{\left\lfloor#1\right\rfloor}
\author{Tomohiro Yamada}
\title{$2$ and $9$ are the only biunitary superperfect numbers\footnote{2010 Mathematics 
Subject Classification: 11A05, 11A25.}
\footnote{Key words: Odd perfect numbers, biunitary superperfect numbers, unitary divisors; biunitary divisors; the sum of divisors.}}
\date{}
\begin{document}
\maketitle

\begin{abstract}
We shall show that $2$ and $9$ are the only biunitary superperfect numbers.
\end{abstract}

\section{Introduction}\label{intro}
As usual, we let $\sigma(N)$ and $\omega(N)$ denote respectively
the sum of divisors and the number of distinct prime factors of a positive integer $N$.
$N$ is called to be perfect if $\sigma(N)=2N$.
It is a well-known unsolved problem whether or not
an odd perfect number exists.  Interest to this problem
has produced many analogous notions and problems concerning
divisors of an integer.
For example, Suryanarayana \cite{Sur} called $N$ to be
superperfect if $\sigma(\sigma(N))=2N$.  It is asked in this paper
and still unsolved whether there were odd superperfect numbers.
Extending the notion of superperfect numbers further,
G. L. Cohen and te Riele \cite{CR} indroduced the notion of
$(m, k)$-perfect numbers, a positive integer $N$ for which
$\sigma(\sigma(\cdots (N)\cdot ))=kN$ with $\sigma$ repeated $m$ times.

Some special classes of divisors have also been studied in several papers.
One of them is the class of unitary divisors defined by Eckford Cohen \cite{CohE}.
A divisor $d$ of $N$ is called a unitary divisor if $(d, N/d)=1$.
Wall \cite{Wal1} introduced the notion of biunitary divisors.
A divisor $d$ of a positive integer $N$ is called a biunitary divisor
if $\gcd(d, n/d)=1$.  For further generalization, see Graeme L. Cohen \cite{CohG}.

According to E. Cohen \cite{CohE} and Wall \cite{Wal1} respecitvely,
$\sigma^*(N)$ and $\sbu(N)$ shall denote the sum of unitary and biunitary divisors of $N$.
Moreover, we write $d\mid\mid N$ if $d$ is a unitary divisor of $N$.  Hence,
for a prime $p$, $p^e\mid\mid N$ if $p$ divides $N$ exactly $e$ times.
Replacing $\sigma$ by $\sigma^*$, Subbarao and Warren \cite{SW}
introduced the notion of a unitary perfect number.  $N$ is called
to be unitary perfect if $\sigma^*(N)=2N$.  They proved
that there are no odd unitary perfect numbers and $6, 60, 90, 87360$
are the first four unitary perfect numbers.  Later the fifth unitary perfect number
has been found by Wall \cite{Wal2}, but no further instance has been found.
Subbarao \cite{Sub} conjectured that there are only finitely many
unitary perfect numbers.

Similarly, a positive integers $N$ is called biunitary perfect if $\sbu(N)=2N$.
Wall \cite{Wal1} showed that $6, 60$ and $90$, the first three unitary perfect numbers,
are the only biunitary perfect numbers.

Combining the notion of superperfect numbers and the notion of unitary divisors,
Sitaramaiah and Subbarao \cite{SS} studied unitary superperfect numbers,
integers $N$ satisfying $\sigma^*(\sigma^*(N))=2N$.  They found all unitary super perfect
numbers below $10^8$ (Further instances are given in A038843 in OEIS \url{https://oeis.org/A038843}).
The first ones are $2, 9, 165, 238$.  Thus
there are both even and odd ones.  The author \cite{Ymd1} showed that
$9, 165$ are all of the odd ones.

Now we can call an integer $N$ satisfying $\sbu(\sbu(N))=2N$
to be {\it biunitary superperfect}.
We can see that $2$ and $9$ are biunitary superperfect as well as unitary superperfect,
while $2$ is also superperfect (in the ordinary sense).

In this paper, we shall determine all biunitary superperfect numbers.

\begin{thm}\label{th1}
$2$ and $9$ are the only biunitary superperfect numbers.
\end{thm}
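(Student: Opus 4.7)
The plan is to exploit the multiplicativity of $\sbu$ together with a sharp $2$-adic analysis. First I would record the explicit formula $\sbu(p^e)=\sigma(p^e)$ for $e$ odd and $\sbu(p^e)=\sigma(p^e)-p^{e/2}$ for $e$ even, and deduce the parity lemma: $\sbu(p^e)$ is odd if and only if $p=2$. Consequently $v_2(\sbu(K))\geq \omega(K)$ for every odd $K>1$, while $\sbu(2^c)$ is odd for all $c\geq 0$. Writing $N=2^a n$ and $M:=\sbu(N)=2^b m$ with $n,m$ odd, multiplicativity applied to both $\sbu(N)=M$ and $\sbu(M)=2N$ then gives
\[
v_2(\sbu(n))=b\geq \omega(n),\qquad v_2(\sbu(m))=a+1\geq \omega(m).
\]

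In the odd case $a=0$ this forces $\omega(m)\leq 1$; the subcase $m=1$ would make $\sbu(M)=\sbu(2^b)$ odd, contradicting $\sbu(M)=2N$, so $M=2^b q^f$ for a single odd prime $q$. A case analysis of $v_2(\sbu(q^f))$ on $f$ rules out many exponents (for instance $f=3,4$ already force $v_2\geq 2$) and pins down $q$ modulo small powers of $2$. Moreover $\sbu(N)=2^b q^f$ together with multiplicativity implies that every prime-power factor $\sbu(p_i^{e_i})$ of $N$ must itself have the rigid form $2^{\alpha_i} q^{\beta_i}$. Primitive-prime-divisor (Zsygmondy) arguments applied to the cyclotomic factorisation of $\sigma(p_i^{e_i})$ should then bound the exponents $e_i$, and explicit checking of the surviving small candidates reduces to $N=9$ (consistent with $\sbu(9)=10$ and $\sbu(10)=18$).

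In the even case $a\geq 1$, a symmetric analysis together with the elementary bound $\sbu(2^a)\leq 2^{a+1}-1$ and $\omega(n)\leq b$ again forces the odd part $n$ to have very few prime factors, and direct inspection leaves only $N=2$. The main obstacle will be solving the Diophantine equation $\sbu(p^e)=2^{\alpha}q^{\beta}$ for an odd prime $p$ with just two primes on the right: even exponents are particularly awkward, because the correction $-p^{e/2}$ destroys the clean cyclotomic factorisation available for the unitary analogue $\sigma^*(p^e)=p^e+1$, so one must combine Zsygmondy-type primitive-prime-divisor bounds with ad hoc case work to complete the argument.
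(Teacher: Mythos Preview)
Your $2$-adic framework is sound and, in the odd case, correctly reproduces the paper's first reduction: from $a=0$ you get $\omega(m)\le1$, hence $M=\sbu(N)=2^{b}q^{f}$, and then each $\sbu(p_i^{e_i})$ must have the shape $2^{\alpha_i}q^{\beta_i}$, which a Zsygmondy/Bang argument (the paper's Lemma~2.4) bounds by $e_i\le 4$. Up to here your plan and the paper coincide.

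The genuine gap is that your outline never bounds the $2$-exponent $b$ of $\sbu(N)$ (nor, symmetrically, $a$ in the even case). The inequalities $\omega(n)\le b$ and $\omega(m)\le a+1$ point the wrong way for this purpose: they cap the number of odd prime factors by an \emph{a priori} unbounded quantity, not the reverse. Your observation that $v_2(\sbu(q^{f}))=1$ excludes $f=3,4$ is correct, but every $f\equiv 2\pmod 4$ (e.g.\ $f=2,6,10,\dots$) survives that test, so the $2$-adic constraint alone does not pin down $f$ either. Consequently ``explicit checking of the surviving small candidates'' is not yet a finite computation, and for the even case your ``symmetric analysis together with $\sbu(2^a)\le 2^{a+1}-1$'' gives no bound at all on $a,b,\omega(n)$.

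The paper closes this gap with multiplicative \emph{size} estimates that you omit. Its Lemma~2.2 gives $\sbu(p^e)/p^e\ge 1+1/p$ for $e\neq 2$ (and $\ge 1+1/p^{2}$ always), and the engine of Sections~3--4 is to feed these into
\[
\frac{\sbu(2^{b})}{2^{b}}\cdot\frac{\sbu(2^{a})}{2^{a}}\;\le\;\frac{\sbu(\sbu(N))}{\sbu(N)}\cdot\frac{\sbu(N)}{N}\;=\;2,
\]
forcing $a=2$ or $b=2$ in the even case, then $(a,b)=(2,2)$, after which a short chase through the primes $5,13,17,29$ finishes. In the odd case the same ratio trick, applied with a prime divisor $p\mid 2^{m}-1$ when $b=2m-1$, yields $\sbu(\sbu(N))/N>2$ unless $m=1$; the even-$b$ branch likewise forces $2^{m+1}+1$ to be prime and is then finished with St{\o}rmer's theorem on $p^{2}+1=2\cdot 5^{l}$ and $p^{2}+1=2\cdot 13^{l}$. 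Without inserting such ratio bounds alongside your $2$-adic skeleton, the argument cannot terminate.
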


Theorem \ref{th1} can be thought to be the analogous result for
unitary superperfect numbers by the author \cite{Ymd1}.
Our proof is completely elementary but has some different character
from the proof of the unitary analogue.
Our argument leads to a contradiction that $\sbu(\sbu(N))/N>2$ in many cases,
while Yamada \cite{Ymd1} leads to a contradiction that $\sigma^*(\sigma^*(N))/N<2$.
Moreover, in the biunitary case, we can determine all (odd or even) biunitary superperfect numbers.

Our method does not seem to work to find all odd superperfect numbers.
It prevents us from bounding $\omega(N)$ and $\omega(\sigma(N))$
that $\sigma(p^e)$ with $p$ odd takes an odd value if $e$ is even.
All that we know is the author's result in \cite{Ymd2} that there are only finitely many
odd superperfect numbers $N$ with $\omega(N)\leq k$ or $\omega(\sigma(N))\leq k$ for each {\it given} $k$.

Finally, analogous to G. L. Cohen and te Riele \cite{CR},
we can define a positive integer $N$ to {\it $(m, k)$-biunitary perfect}
if its $m$-th iteration of $\sbu$ equals to $kN$.
We searched for numbers which are $(2, k)$-biunitary perfect for some $k$
(or {\it biunitary multiply superperfect numbers})
and exhaustive search revealed that there exist $173$ integers $N$ below $2^{30}$
dividing $\sbu(\sbu(N))$ including $1$, which are given in Table \ref{tbl}.

Based on our theorem and our search result, we can pose the following problems:
\begin{itemize}
\item For each integer $k\geq 3$, are there infinitely or only finitely many integers $N$ for which $\sbu(\sbu(N))=kN$?
In particular, are the $24$ given integers $N$ all for which $\sbu(\sbu(N))=kN$ with $k\leq 5$?
\item For each integer $k=19$ or $k\geq 21$, does there exist at least one or no integer $N$ for which $\sbu(\sbu(N))=kN$?
\item Are $N=9, 15, 21, 1023, 8925, 15345$ all odd integers diving $\sbu(\sbu(N))$?
\end{itemize}

\begin{table}
\caption{All positive integers $N\leq 2^{30}$ such that $\sbu(\sbu(N))=kN$ for some integer $k$}\label{tbl}
\begin{center}
\begin{small}
\begin{tabular}{| c | c | l |}
 \hline
$k$ & $\# N$'s & $N$ \\
 \hline
$1$ & $1$ & $1$ \\
$2$ & $2$ & $2, 9$ \\
$3$ & $4$ & $8, 10, 21, 512$ \\
$4$ & $8$ & $15, 18, 324, 1023, \ldots, 8925, 15345$ \\
$5$ & $9$ & $24, 30, 144, 288, \ldots, 14976, 23040$ \\
$6$ & $13$ & $42, 60, 160, 270, \ldots, 673254400$ \\
$7$ & $13$ & $240, 1200, 2400, \ldots, 171196416$ \\
$8$ & $18$ & $648, 2808, 3570, \ldots, 1062892908$ \\
$9$ & $26$ & $168, 960, 10368, \ldots, 769600000$ \\
$10$ & $18$ & $480, 2856, 13824, \ldots, 627720192$ \\
$11$ & $8$ & $321408, 1392768, \ldots, 125706240$ \\
$12$ & $26$ & $4320, 10080, \ldots, 779688000$ \\
$13$ & $8$ & $57120, 17821440, \ldots, 942120960$ \\
$14$ & $9$ & $103680, 217728, \ldots, 773760000$ \\
$15$ & $3$ & $1827840, 181059840, 754427520$ \\
$16$ & $4$ & $23591520, \ldots, 594397440$ \\
$17$ & $1$ & $898128000$ \\
$18$ & $1$ & $374250240$ \\
$20$ & $1$ & $11975040$ \\
 \hline
\end{tabular}
\end{small}
\end{center}
\end{table}

\section{Preliminary Lemmas}\label{lemmas}
In this section, we shall give several preliminary lemmas concerning the sum of
infinitary divisors used to prove our main theorems.

Before all, we introduce two basic facts from \cite{Wal1}.
The sum of biunitary divisors function $\sbu$ is multiplicative.
Moreover, if $p$ is prime and $e$ is a positive integer, then
\begin{equation}\label{eq21}
\sbu(p^e)= \begin{cases}
p^e+p^{e-1}+\cdots +1=\frac{p^{e+1}-1}{p-1}, & \text{if $e$ is odd;} \\
\frac{p^{e+1}-1}{p-1}-p^{e/2}=\frac{(p^{e/2}-1)(p^{e/2+1}+1)}{p-1}, & \text{if $e$ is even.}
\end{cases}
\end{equation}
We note that, using the floor function, this can be represented by the single formula:
\begin{equation}\label{eq21x}
\sbu(p^e)=\frac{\left(p^{\floor{\frac{e+2}{2}}}+1\right)\left(p^{\floor{\frac{e+1}{2}}}-1\right)}{p-1}.
\end{equation}

From these facts, we can deduce the following lemmas almost immediately.

\begin{lem}\label{a}
$\sbu(n)$ is odd if and only if $n$ is a power of $2$ (including $1$).
More exactly, $\sbu(n)$ is divisible by $2$ at least $\omega(n)$ times is $n$ is odd
and at least $\omega(n)-1$ times is $n$ is even.
\end{lem}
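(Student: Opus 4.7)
The plan is to exploit the multiplicativity of $\sbu$ that is stated just above the lemma: one has $\sbu(n)=\prod_{p^{e}\|n}\sbu(p^{e})$, so the $2$-adic valuation of $\sbu(n)$ is the sum of the $2$-adic valuations of its local factors. It therefore suffices to analyse the parity of $\sbu(p^{e})$ prime by prime, using the explicit formula (\ref{eq21}).

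First I would handle the prime $p=2$. When $e$ is odd, $\sbu(2^{e})=2^{e+1}-1$ is plainly odd; when $e$ is even, $\sbu(2^{e})=(2^{e/2}-1)(2^{e/2+1}+1)$ is a product of two odd integers, hence odd. So any power of $2$ dividing $n$ contributes an odd local factor and does not influence $v_{2}(\sbu(n))$ at all.

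Next I would show that $\sbu(p^{e})$ is even for every odd prime $p$ and every $e\geq 1$, reading (\ref{eq21}) as a sum of powers of $p$. If $e$ is odd, then $\sbu(p^{e})=1+p+\cdots+p^{e}$ is a sum of $e+1$ odd terms, hence even because $e+1$ is even. If $e$ is even, then $\sbu(p^{e})=(1+p+\cdots+p^{e})-p^{e/2}$ is a sum of $e$ odd terms (one summand removed from the full geometric series), hence even because $e$ is even. This yields at once the ``if and only if'' assertion: $\sbu(n)$ is odd precisely when no odd prime divides $n$, i.e., when $n$ is a power of $2$ (including $1$).

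For the quantitative bound, the previous paragraph shows that each odd prime divisor of $n$ contributes a factor $\sbu(p^{e})$ of even value to $\sbu(n)$, hence at least one power of $2$. Summing these contributions and noting that the number of odd prime divisors of $n$ equals $\omega(n)$ when $n$ is odd and $\omega(n)-1$ when $n$ is even gives the claimed lower bounds on $v_{2}(\sbu(n))$. I do not anticipate any real obstacle; the only delicate point is to read the even-$e$ case of (\ref{eq21}) correctly so that the subtracted term $p^{e/2}$ removes exactly one odd summand from the geometric series, leaving an even number of odd terms.
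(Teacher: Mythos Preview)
Your proposal is correct and follows essentially the same route as the paper: use multiplicativity to reduce to prime powers, verify from (\ref{eq21}) that $\sbu(2^e)$ is always odd while $\sbu(p^e)$ is always even for odd $p$, and then count the odd prime divisors of $n$ to get the stated lower bound on $v_2(\sbu(n))$. Your parity check by counting odd summands is a bit more explicit than the paper's one-line appeal to (\ref{eq21}), but the argument is the same.
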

\begin{proof}
Whether $e$ is even or odd, $\sbu(p^e)$ is odd if and only if $p=2$
by (\ref{eq21}).
Factoring $n=2^e \prod_{i=1}^r p_i^{e_i}$
into distinct odd primes $p_1, p_2, \ldots, p_r$ with $e\geq 0$ and $e_1, e_2, \ldots, e_r>0$,
each $\sbu(p_i^{e_i})$ is even.
Hence $\sbu(n)=\sbu(2^e)\prod_{i=1}^r \sbu(p_i^{e_i})$ is divisible by $2$ at least $r$ times,
where $r=\omega(n)$ if $n$ is odd and $\omega(n)-1$ if $n$ is even.
\end{proof}

\begin{lem}\label{b}
For any prime $p$ and any positive integer $e$, $\sbu(p^e)/p^e\geq 1+1/p^2$.
Moreover, $\sbu(p^e)/p^e\geq 1+1/p$ unless $e=2$
and $\sbu(p^e)/p^e\geq (1+1/p)(1+1/p^3)$ if $e\geq 3$.
More generally, for any positive integers $m$ and $e\geq 2m-1$,
we have $\sbu(p^e)/p^e\geq \sbu(p^{2m})/p^{2m}$
and, unless $e=2m$, $\sbu(p^e)/p^e\geq 1+1/p+\cdots +1/p^m$.
\end{lem}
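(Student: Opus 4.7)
The plan is to reduce every claim to an elementary comparison of truncated geometric sums via formula (\ref{eq21}). Dividing through by $p^e$ gives
\[
\frac{\sbu(p^e)}{p^e}=\sum_{i=0}^{e}\frac{1}{p^i}-\varepsilon_e\cdot\frac{1}{p^{e/2}},
\]
where $\varepsilon_e=1$ when $e$ is even and $\varepsilon_e=0$ when $e$ is odd. I would first establish the general statement, from which the two explicit numerical bounds fall out by specialising to $m=1$ and $m=2$.

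For the inequality $\sbu(p^e)/p^e\geq \sbu(p^{2m})/p^{2m}$ on the range $e\geq 2m-1$, I split into cases by the parity and size of $e$. If $e=2m-1$, the difference of the two sides equals $1/p^m-1/p^{2m}\geq 0$. If $e=2m$, there is nothing to prove. If $e=2m+2j$ with $j\geq 1$, then going from $2m$ to $e$ adds the positive terms $\sum_{i=2m+1}^{2m+2j}1/p^i$ and only weakens the subtracted term from $1/p^m$ to $1/p^{m+j}$, so the net change is nonnegative. Finally, for odd $e\geq 2m+1$, the odd-case formula exceeds the even-case formula at $e-1$ by $1/p^e+1/p^{(e-1)/2}\geq 0$, and $e-1\geq 2m$ has been handled.

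For the strict strengthening $\sbu(p^e)/p^e\geq 1+1/p+\cdots+1/p^m$ when $e\neq 2m$ and $e\geq 2m-1$, the odd case is trivial because $\sum_{i=0}^{e}1/p^i$ already contains all required terms (using $e\geq 2m-1\geq m$ for $m\geq 1$). For even $e$ we now have $e\geq 2m+2$ and $e/2\geq m+1$, so
\[
\frac{\sbu(p^e)}{p^e}\geq \sum_{i=0}^{m}\frac{1}{p^i}+\Bigl(\sum_{i=m+1}^{2m+2}\frac{1}{p^i}-\frac{1}{p^{m+1}}\Bigr)\geq \sum_{i=0}^{m}\frac{1}{p^i},
\]
since the parenthesised sum is clearly nonnegative.

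The two explicit bounds then follow by specialisation: taking $m=1$ gives $\sbu(p^e)/p^e\geq \sbu(p^2)/p^2=1+1/p^2$ for all $e\geq 1$ and the stronger $\sbu(p^e)/p^e\geq 1+1/p$ whenever $e\neq 2$. A direct computation from (\ref{eq21}) yields $\sbu(p^4)/p^4=1+1/p+1/p^3+1/p^4=(1+1/p)(1+1/p^3)$, so taking $m=2$ gives $\sbu(p^e)/p^e\geq (1+1/p)(1+1/p^3)$ for all $e\geq 3$. The estimates themselves are one-line manipulations; the only bookkeeping pitfall is keeping the parity subcases lined up correctly when arguing that adding to (or subtracting less from) a geometric tail preserves the desired inequality.
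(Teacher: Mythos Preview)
Your proof is correct and follows essentially the same elementary idea as the paper: compare the geometric-sum expression for $\sbu(p^e)/p^e$ against the truncated sum $1+1/p+\cdots+1/p^m$. The only difference is organizational: the paper first establishes the bound $\sbu(p^e)/p^e\geq 1+1/p+\cdots+1/p^m$ for $e\geq 2m-1$, $e\neq 2m$ (by observing directly that $p^e,\ldots,p^{e-m}$ are biunitary divisors whenever $e$ is odd or $e-m>e/2$), and then deduces from this that the minimum over $e\geq 2m-1$ occurs at $e=2m$; you instead prove the two inequalities independently via parity case-splits on the explicit formula.
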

\begin{proof}
If $e\geq 2m-1$ and $e$ is odd, then $p^e, p^{e-1}, \ldots, p, 1$ are biunitary divisors of $p^e$.
If $e>2m$ and $e$ is even, then $p^e, p^{e-1}, \ldots, p^{e-m}$ are biunitary divisors of $p^e$ since $e-m>e/2$.
Hence, if $e\geq 2m-1$ and $e\neq 2m$, then $\sbu(p^e)=p^e+p^{e-1}+\cdots +1>p^e+\cdots +p^{e-m}=p^e(1+1/p+\cdots +1/p^m)$.
Since $\sbu(p^{2m})/p^{2m}<1+1/p+\cdots +1/p^m$, $\sbu(p^e)/p^e$ with $e\geq 2m-1$ takes its minimum value at $e=2m$.
\end{proof}

Now we shall quote the following lemma of Bang \cite{Ban}, which has been rediscovered
(and extended into numbers of the form $a^n-b^n$)
by many authors such as Zsigmondy\cite{Zsi}, Dickson\cite{Dic} and Kanold\cite{Kan}.
See also Theorem 6.4A.1 in \cite{Sha}.

\begin{lem}\label{lm21}
If $a>b\geq 1$ are coprime integers, then $a^n-1$ has a prime factor
which does not divide $a^m-1$ for any $m<n$, unless $(a, n)=(2, 1), (2, 6)$
or $n=2$ and $a+b$ is a power of $2$.
Furthermore, such a prime factor must be congruent to $1$ modulo $n$.
\end{lem}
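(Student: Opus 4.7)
The plan is to prove this by the method of cyclotomic polynomials, essentially as in the usual modern proof of Zsigmondy's theorem. (We read the statement as referring to $a^n-b^n$ and $a^m-b^m$ rather than $a^n-1$ and $a^m-1$; this is consistent with the form of the third exception.) Starting from the factorization
\[
a^n - b^n = \prod_{d \mid n} \Phi_d^{*}(a,b),
\]
where $\Phi_d^{*}(x,y) = y^{\varphi(d)}\Phi_d(x/y)$ is the homogenized $d$-th cyclotomic polynomial, the task reduces to showing that $\Phi_n^{*}(a,b)$ has a prime divisor that is not a divisor of any $a^m-b^m$ with $m<n$.

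The core structural step is a dichotomy for the prime divisors of $\Phi_n^{*}(a,b)$. Given such a prime $p$, the coprimality of $a$ and $b$ forces $p\nmid ab$, so $ab^{-1}$ has a well-defined multiplicative order $k$ modulo $p$, with $k\mid n$. If $k=n$, then Fermat's little theorem yields $p\equiv 1\pmod n$, and $p$ is a primitive prime divisor as required. Otherwise, a standard analysis of the reduction of $\Phi_n$ modulo $p$ forces $p\mid n$ and $n/k$ to be a power of $p$; in particular such primes all divide $n$, and a lifting-the-exponent estimate keeps their total contribution to $\Phi_n^{*}(a,b)$ bounded by $n$.

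The final step is to defeat this upper bound with a lower bound on $\Phi_n^{*}(a,b)$. Writing $\Phi_n^{*}(a,b)=\prod_{\zeta}(a-b\zeta)$ over primitive $n$-th roots of unity $\zeta$, and pairing complex conjugate factors, one obtains $\Phi_n^{*}(a,b)\geq (a-b)^{\varphi(n)}$, and a somewhat finer estimate of the shape $\Phi_n^{*}(a,b)\gg a^{\varphi(n)}$. Comparing this with the bound $\Phi_n^{*}(a,b)\leq n$ coming from the absence of primitive prime divisors yields a contradiction as soon as $\varphi(n)\log(a-b)$ exceeds $\log n$ by a bounded amount, which handles all but finitely many pairs $(a,n)$.

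The main obstacle is isolating the precise list of exceptions, which requires explicit examination of the small cases where the size comparison above is inconclusive. These are $n=1$ (where $\Phi_1^{*}(a,b)=a-b$ can equal $1$, giving $(a,n)=(2,1)$ with $b=1$); $n=2$ (where $\Phi_2^{*}(a,b)=a+b$ can be a pure power of $2$, in which case $a,b$ are both odd, so $2\mid a-b$ and no primitive prime survives); and the sporadic case $n=6$, $a=2$, $b=1$, where $\Phi_6^{*}(2,1)=3$ already divides $n$, so the only prime factor of $2^6-1=63$ beyond $3$ is $7$, which divides $2^3-1$. A direct check that all remaining small $(a,n)$ do produce a primitive prime then closes the argument, and the congruence $p\equiv 1\pmod n$ is immediate from the first branch of the dichotomy.
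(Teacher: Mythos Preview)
The paper does not prove this lemma at all: it simply quotes Bang's theorem and points to the rediscoveries by Zsigmondy, Dickson, and Kanold, and to Shapiro's textbook. So there is no ``paper's proof'' to compare against; any valid argument you supply goes beyond what the paper does.

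Your outline is the standard cyclotomic-polynomial proof of Zsigmondy's theorem and is sound as a plan. Your reading of the statement (replacing $a^n-1$ by $a^n-b^n$) is the only way to make the hypotheses and the $n=2$ exception coherent, and the paper itself remarks that the $a^n-b^n$ form is the Zsigmondy extension; in the applications (Lemma~\ref{c}) only $b=1$ is actually used. Two places where your sketch is a bit loose: the assertion that the non-primitive part of $\Phi_n^{*}(a,b)$ is ``bounded by $n$'' really rests on the sharper fact that any non-primitive prime $p$ dividing $\Phi_n^{*}(a,b)$ is the largest prime factor of $n$ and (for $n>2$) occurs to the first power only---this is what makes the size comparison decisive. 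And the inequality $\varphi(n)\log(a-b)>\log n$ is vacuous when $a-b=1$, so in that regime you genuinely need the finer estimate $\Phi_n^{*}(a,b)\gg a^{\varphi(n)}$ that you mention, together with the explicit check of small $n$. With those refinements made precise, the argument goes through and recovers exactly the listed exceptions.
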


As a corollary, we obtain the following lemma:

\begin{lem}\label{c}
Let $p$, $q$ be odd primes and $e$ be a positive integer.
If $\sbu(p^e)=2^a q^b$ for some integers $a$ and $b$, then
a) $e=1$, b) $e=2$ and $p^2+1=2q^b$,
c) $e=3$, $p=2^{a-1}-1$ is a Mersenne prime and $p^2+1=2q^b$ or
d) $e=4$, $p=2^{(a-1)/2}-1$ is a Mersenne prime and $p^2-p+1=q^b$.
Moreover, if $\sbu(2^e)$ is a prime power, then $e\leq 4$.
\end{lem}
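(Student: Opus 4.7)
The plan is to compute $\sbu(p^e)$ explicitly via (\ref{eq21}) for each value of $e$, analyze when the resulting factorization can have only the two prime divisors $2$ and $q$, and rule out $e\geq 5$ by producing two distinct odd primitive prime divisors via Lemma~\ref{lm21}.

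For $e\leq 4$ I would handle the cases directly. The case $e=1$, $\sbu(p)=p+1$, is case (a). For $e=2$, $\sbu(p^2)=p^2+1\equiv 2\pmod 4$ (since $p$ is odd and $p^2\equiv 1\pmod 8$), forcing $a=1$ and giving case (b). For $e=3$, the factorization $\sbu(p^3)=(p+1)(p^2+1)$ combined with $\gcd(p+1,p^2+1)=2$ (from $p^2+1=(p-1)(p+1)+2$) shows the odd parts are coprime; since the odd part of $p^2+1$ exceeds $1$, it must contain $q$, so $p+1$ must be a pure power of $2$, yielding the Mersenne condition of case (c). For $e=4$, $\sbu(p^4)=(p+1)^2(p^2-p+1)$ and $\gcd(p+1,p^2-p+1)\in\{1,3\}$ (from $p^2-p+1=(p+1)(p-2)+3$); the coprime case gives (d) by the same Mersenne reasoning with $p^2-p+1=q^b$, while the gcd-$3$ case forces $q=3$ and $p^2-p+1=3^u$, which I would exclude by writing $(2p-1)^2=4\cdot 3^u-3$ and reducing modulo $9$: this gives $(2p-1)^2\equiv 6\pmod 9$ for $u\geq 2$, impossible since the quadratic residues modulo $9$ are $\{0,1,4,7\}$, while $u=1$ forces the even solution $p=2$.

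For $e\geq 5$, my plan is to exhibit two distinct odd primes dividing $\sbu(p^e)$, which contradicts $\sbu(p^e)=2^aq^b$ (at most one odd prime divisor). For odd $e\geq 5$, both $\Phi_{(e+1)/2}(p)$ and $\Phi_{e+1}(p)$ divide $\sbu(p^e)=(p^{e+1}-1)/(p-1)$, with $(e+1)/2\geq 3$ and $e+1\geq 6$; Lemma~\ref{lm21} supplies each with a primitive prime divisor congruent to $1$ modulo the respective exponent, and these two primes are odd (they are prime and $\geq 4$) and distinct (they have different multiplicative orders of $p$). The exceptions to Lemma~\ref{lm21} do not apply since $p$ is odd and both exponents are at least $3$. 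For even $e=2k\geq 6$, the factorization $\sbu(p^e)=((p^k-1)/(p-1))(p^{k+1}+1)$ contains $\Phi_k(p)$ and $\Phi_{2(k+1)}(p)$ with $k\geq 3$ and $2(k+1)\geq 8$; the same argument produces two distinct odd primitive primes and yields the contradiction.

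Finally, the moreover statement about $\sbu(2^e)$ is handled by direct computation ($\sbu(2^e)=3,5,15,27$ for $e=1,2,3,4$) for the necessity that $e\leq 4$, while for $e\geq 5$ the same cyclotomic/Lemma~\ref{lm21} strategy shows $\sbu(2^e)$ has at least two distinct prime factors and hence is not a prime power; the only subtlety is the exceptional pair $(a,n)=(2,6)$ in Lemma~\ref{lm21} which affects $e=5$, but one verifies directly that $\sbu(2^5)=63=3^2\cdot 7$ already has two distinct primes. The main technical obstacle in the whole argument is ensuring the primitive prime divisors from Lemma~\ref{lm21} exist and are distinct in every subcase, and in the even-$e$ range verifying the cyclotomic factorization of $(p^k-1)(p^{k+1}+1)/(p-1)$ contains the two required factors $\Phi_k(p)$ and $\Phi_{2(k+1)}(p)$; carefully tracking the small exceptions of Lemma~\ref{lm21} is the only place where minor case-work is unavoidable.
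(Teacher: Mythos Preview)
Your proposal is correct and follows essentially the same route as the paper: factor $\sbu(p^e)$ via (\ref{eq21})/(\ref{eq21x}), invoke Lemma~\ref{lm21} to produce two distinct odd prime divisors once $e\geq 5$, and treat $e\leq 4$ by direct case analysis. Your write-up is in fact more careful than the paper's in one place: for $e=4$ you explicitly dispose of the subcase $\gcd(p+1,p^2-p+1)=3$ (which the paper passes over), and you also flag the Zsigmondy exception $(2,6)$ for the $p=2$ statement---just be aware that this exception also touches $e=11$ (via $(e+1)/2=6$) and $e=12$ (via $k=6$), which your general ``track the small exceptions'' remark covers but which you may want to list alongside $e=5$.
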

\begin{proof}
Let $p$ be an arbitrary prime, which can be $2$.  We set $m=e/2, l=e/2+1$ if $e$ is even
and $m=l=(e+1)/2$ if $e$ is odd.
(\ref{eq21x}) gives that $\sbu(p^e)=(p^l+1)(p^m-1)/(p-1)$ if $e$ is even or odd.

If $m\geq 3$, then, by Lemma \ref{lm21}, $(p^m-1)/(p-1)$ must have a odd prime factor
and $p^l+1$ (if $e$ is even or odd) must have another odd prime factor.
Hence we have $m\leq 2$ and therefore $e\leq 4$.
If $e=1$, then $\sbu(p^e)=\sbu(p)=p+1$, which must be the case a).
If $e=2$, then $\sbu(p^e)=p^2+1$, which must be the case b).
If $e=3$, then $\sbu(p^e)=\sbu(p^3)=(p+1)(p^2+1)$.  If $p$ is odd, then $p+1=2^{a-1}$
and $p^2+1=2q^b$ for some odd prime $q$ since $p^2+1\equiv 2$ (mod $4$).
If $e=4$, then $\sbu(p^e)=(p+1)(p^3+1)=(p+1)^2(p^2-p+1)$, which must be the case d).
\end{proof}

\section{The even case}

Let $N$ be an even biunitary superperfect number.

Firstly, we assume that $\sbu(N)$ is odd.  By Lemma \ref{a}, $N=2^e$ must be a power of $2$.

If $e=2s-1$ is odd and $s>1$, then $\sbu(N)=\sbu(2^{2s-1})=2^{2s}-1=(2^s-1)(2^s+1)$
and $\sbu(\sbu(N))=\sbu(2^s-1)\sbu(2^s+1)\geq 2^s(2^s+2)>2^{2s}$,
which clearly contradicts that $\sbu(\sbu(N))=2N=2^{2s+1}$.

If $e=2s$ is even, then $\sbu(N)=\sbu(2^{2s})=(2^s-1)(2^{s+1}+1)$.
For odd $s>1$, we have $\sbu(\sbu(N))=\sbu(2^s-1)\sbu(2^{s+1}+1)>2^s(2^{s+1}+2)>2^{2s+1}$.
For even $s$, we have $3\mid 2^s-1\mid \sbu(N)$ and therefore
$\sbu(\sbu(N))\geq (10/9)\sbu(N)=(10/9)(2^s-1)(2^{s+1}+1)>2^s(2^{s+1}+1)>2^{2s+1}$.
Hence if $e=2s$ (with $s$ even or odd) and $s>1$, then $\sbu(\sbu(N))>2N$, a contradiction again.

Now we have $e\leq 2$ and we can easily confirm that $2$ is biunitary superperfect but $4$ not.
Hence $N=2$ is the only one in the case $\sbu(N)$ is odd.

Nextly, we assume that $\sbu(N)$ is even and $2^e\mid\mid N, 2^f\mid\mid \sbu(N)$.
We can easily see that
\begin{equation}\label{eq31}
\frac{\sbu(2^f)}{2^f}\cdot\frac{\sbu(2^e)}{2^e}<\frac{\sbu(\sbu(N))}{\sbu(N)}\cdot \frac{\sbu(N)}{N}=2.
\end{equation}

If $e\neq 2$ and $f\neq 2$, then Lemma \ref{b} gives that
$(\sbu(2^f)/2^f)(\sbu(2^e)/2^e)\geq (3/2)^2>2$, which contradicts (\ref{eq31}).
If $e=2$ and $f\geq 3$, then $\sbu(2^f)/2^f\geq 27/16$ and $(\sbu(2^f)/2^f)(\sbu(2^e)/2^e)\geq (27/16)(5/4)>2$,
a contradiction again.  Similarly, we cannot have $e\geq 3$ and $f=2$.

If $(e, f)=(2, 1)$, then $\sbu(2)=3\mid N$ and therefore,
by Lemma \ref{b},
\begin{equation}
\frac{\sbu(\sbu(N))}{N}\geq \frac{10}{9}\cdot\frac{\sbu(2^f)}{2^f}\cdot\frac{\sbu(2^e)}{2^e}=\frac{10}{9}\cdot\frac{15}{8}>2,
\end{equation}
which contradicts the assumption that $\sbu(\sbu(N))=2N$.
Similarly, it is impossible that $(e, f)=(1, 2)$.

The last remaining case is the case $(e, f)=(2, 2)$.
Now we see that $\sbu(2^2)=5$ must divide both $N$ and $\sbu(N)$.
Let $5^g\mid\mid N$ and $5^h\mid\mid \sbu(N)$.  If $g\neq 2$ and $h\neq 2$, then
$\sbu(\sbu(N))/N\geq (5/4)^2(6/5)^2>2$, which is a contradiction again.
If $g\neq 2$ and $h=2$, then $13=(5^2+1)/2$ must divide $N$.
We must have $13^2\mid\mid N$ since otherwise
$\sbu(\sbu(N))/N\geq (5/4)^2(6/5)(26/25)(14/13)>2$, an contradiction.
Since $\sbu(13^2)=2\cdot 5\cdot 17$, $17$ must divide $\sbu(N)$.
Proceeding as above, $17^2$ must divide $\sbu(N)$ and $29=\sbu(17^2)/10$ must divide $N$.
Hence three odd primes $5, 13$ and $29$ must divide $N$
and $2^3$ must divide $\sbu(N)$, which contradicts that $f=2$ in view of Lemma \ref{a}.

Finally, if $g=2$, then $13=\sbu(5^2)/2$ divides both $N$ and $\sbu(N)$.
Let $k$ be the exponent of $13$ dividing $\sbu(N)$.  If any odd prime $p$
other than $5$ divides $\sbu(13^k)$, then three odd primes $5, 13$ and $p$ must divide $N$
and $2^3$ must divide $\sbu(N)$, contradicting that $f=2$ again.
Hence we must have $\sbu(13^k)=2^a 5^b$, which is impossible by Lemma \ref{c}
noting that $\sbu(13)=2\cdot 7$ and $\sbu(13^2)=2\cdot 5\cdot 17$.
Now we have confirmed that $2$ is the only even biunitary superperfect number.

\section{The odd case}

Let $N$ be an odd biunitary superperfect number.
Since $2\mid\mid 2N=\sbu(\sbu(N))$, by Lemma \ref{a}, we have $\sbu(N)=2^f q^g$
and $\sbu(2^f)\sbu(q^g)=2N$ for some odd prime $q$.
Factor $N=\prod_i p_i^{e_i}$ into distinct odd primes $p_i$'s.

Firstly, we consider the case $f=2m-1$ is odd.  Hence $\sbu(2^f)=2^{2m}-1=(2^m-1)(2^m+1)$.

Assume that $m>1$ and take an arbitrary prime factor $p$ of $2^m-1$.
Then $p\leq 2^m-1$ must divide $N$ and therefore
\begin{equation}
\frac{\sbu(\sbu(N))}{N}>\frac{p^2+1}{p^2}\cdot \frac{2^{2m}-1}{2^{2m-1}}
>\frac{2^{2m}}{2^{2m}-1}\cdot \frac{2^{2m}-1}{2^{2m-1}}=2,
\end{equation}
which is impossible.
Hence we must have $m=f=1$ and $\sbu(2^f)=3$ divides $N$.
But, since $\omega(N)\leq m$ by Lemma \ref{a}, we must have $N=3^e$.
By Lemma \ref{c}, we have $e\leq 4$.
Checking each $e$, we see that only $N=3^2$ is appropriate.

Nextly, we consider the case $f=2m$ is even and $\sbu(2^f)=(2^m-1)(2^{m+1}+1)$.

If $2^{m+1}+1$ is composite, then some $p_1\leq \sqrt{2^{m+1}+1}$ must divide $2^{m+1}+1$.
We observe that $2^{m+1}+1=p_1^2$, or equivalently $2^{m+1}=(p_1-1)(p_1+1)$
occurs only when $(m, p_1)=(2, 3)$.
Moreover, it is impossible that $p_1^2+1=2^{m+1}$ since the left cannot be divisible by $4$.
Hence we must have $p_1^2\leq 2^{m+1}-3$ or $(m, p_1)=(2, 3)$.
By the same argument as above, if $p_1^2\leq 2^{m+1}-3$, then we should have
\begin{equation}
\begin{split}
\frac{\sbu(\sbu(N))}{N}> & \frac{p_1^2+1}{p_1^2}\cdot \frac{(2^m-1)(2^{m+1}+1)}{2^{2m}} \\
\geq & \frac{2^{m+1}-2}{2^{m+1}-3}\cdot \frac{(2^m-1)(2^{m+1}+1)}{2^{2m}}=\frac{2^{3m+1}-3\cdot 2^{2m}+1}{2^{3m}-3\cdot 2^{2m-1}} \\
> & 2,
\end{split}
\end{equation}
which is impossible.
If $m=2$ and $p_1=3$, then, since $\sbu(2^4)=3^3$, we must have $e_1=3$ or $e_1=4$
and therefore, by Lemma \ref{b},
\begin{equation}
\frac{\sbu(\sbu(N))}{N}>\frac{\sbu(2^4)}{2^4}\cdot \frac{\sbu(3^{e_1})}{3^{e_1}}
\geq \frac{27}{16}\cdot\frac{112}{81}=\frac{7}{3}>2,
\end{equation}
which is impossible again.

Hence $p_1=2^{m+1}+1$ must be a prime dividing $N$.
By Lemma \ref{c}, we must have $e_1\leq 4$.

If $e_1=1, 3$ or $4$, then $p_1+1=2^{m+1}+2$ divides $\sbu(N)$
and therefore $p_1+1=2(2^m+1)=2 q^l$.  By Lemma \ref{c},
$m=3, 2^3+1=3^2$ or $2^m+1$ must be a prime.
In the latter case, we must have $m=1$ since $2^m+1$ and $p_1=2^{m+1}+1$
are both prime.  Hence $m=1, p_1=5$ or $m=3, p_1=17$ and, in both cases, $q=3$.

The former case $(m, p_1, q)=(1, 5, 3)$ implies that $\sbu(N)=2^2 3^g$ and therefore
$\sbu(5^{e_1})=2^a 3^b$.  Hence we must have $e_1=1$
and $N$ must have the other prime factor $p_2$ such that
$N=5 p_2^{e_2}, \sbu(p_2^{e_2})=2\cdot 3^{g-1}$ and $\sbu(3^g)=2 p_2^{e_2}$.
We see that $e_2=1, p_2=2\cdot 3^{g-1}-1$ and $\sbu(3^g)=2p_2$.
Since $p_2\neq 5$, we must have $g\neq 2$ and
therefore $\sbu(3^g)\geq 4\cdot 3^{g-1}>2p_2$, a contradiction.
Hence we cannot have $(m, p_1, q)=(1, 5, 3)$.
The latter case $(m, p_1, q)=(3, 17, 3)$ implies that $\sbu(N)=2^6 3^g$ and therefore
$\sbu(\sbu(N))/N>(119/64)(10/9)>2=\sbu(\sbu(N))/N$, which is a contradiction again.

Now the remaining is the case $p_1=2^{m+1}+1$ is prime and $e_1=2$, so that $p_1^2+1=2q^l$.
Since $p_1$ must be a Fermat prime, we have $p_1^2+1\equiv 0$ (mod $5$) unless $m=1, p_1=5$.
Hence we must have $p_1=5$ or $p_1>5, p_1^2+1=2\cdot 5^l$.
If $p_1^2+1=2\cdot 5^l$, then St{\o}rmer's result \cite[p.\ 26]{Stm} gives that $p_1=3$ or $7$,
neither of which can occur since $p_1=2^{m+1}+1$ must be a Fermat prime greater than $5$.
Hence the only possibility is that $m=1, p_1=5$ and $q=13$.
We see that $\sbu(N)=2^2 13^g$ and $N$ must have the other prime factor $p_2$ such that $N=5^2 p_2^{e_2}$,
$\sbu(p_2^{e_2})=2\cdot 13^{g-1}$ and $\sbu(13^g)=10p_2^{e_2}$.
By Lemma \ref{c}, we must have $e_2\leq 4$.  However, if $e_2>2$, then
$\sbu(p_2^{e_2})$ must be divisible by $2^2$, which is impossible.
If $e_2=2$, then from St{\o}rmer's result \cite[p.\ 26]{Stm} we obtain that
$p_2=239, \sbu(239^2)=2\cdot 13^4$ and $g=5$, noting that $p_2\neq 5$.
Thus $7=(13+1)/2$ must divide $\sbu(\sbu(N))/2=N=5^2 p_2^{e_2}=5^2\cdot 239^2$, which is absurd.
Finally, if $e_2=1$, then $p_2=2\cdot 13^{g-1}-1$ and $\sbu(13^g)=10p_2>15\cdot 13^{g-1}>\sbu(13^g)$,
which is a contradiction.
Now our proof is complete.

{}
\vskip 12pt

{\small Center for Japanese language and culture, Osaka University,\\ 562-8558, 8-1-1, Aomatanihigashi, Minoo, Osaka, Japan}\\
{\small e-mail: \protect\normalfont\ttfamily{tyamada1093@gmail.com} URL: \url{http://tyamada1093.web.fc2.com/math/}
\end{document}